\DeclareFontFamily{OT1}{rsfs}{}
\DeclareFontShape{OT1}{rsfs}{n}{it}{<-> rsfs10}{}
\DeclareMathAlphabet{\mathscr}{OT1}{rsfs}{n}{it}
\theoremstyle{plain}
\newtheorem{theorem}{Theorem}[section]
\newtheorem{lemma}[theorem]{Lemma}
\newtheorem{proposition}[theorem]{Proposition}
\newtheorem{corollary}[theorem]{Corollary}
\theoremstyle{definition}
\newtheorem{define}{Definition}[section]
\theoremstyle{remark}
\newtheorem{remark}{Remark}
\newtheorem*{acknowledgement}{Acknowledgement}
\begin{document}


\title[Boundary behavior of the squeezing function]{Boundary behavior of the squeezing function near a global extreme point}
\author[Ninh Van Thu, Nguyen Thi Kim Son, and Chu Van Tiep]{Ninh Van Thu\textit{$^{1,2}$}, Nguyen Thi Kim Son\textit{$^3$}, and Chu Van Tiep\textit{$^4$}}

\address{Ninh Van Thu}
\address{\textit{$^{1}$}~Department of Mathematics, Vietnam National University, Hanoi, 334 Nguyen Trai, Thanh Xuan, Hanoi, Vietnam}
\address{\textit{$^{2}$}~Thang Long Institute of Mathematics and Applied Sciences,
	Nghiem Xuan Yem, Hoang Mai, HaNoi, Vietnam}
\email{thunv@vnu.edu.vn}
\address{Nguyen Thi Kim Son}
\address{\textit{$^{3}$}~Department of Mathematics, Hanoi University of Mining and Geology, 18 Pho Vien, Bac Tu Liem, Hanoi, Vietnam}
\email{kimsonnt.0611@gmail.com}
\address{Chu Van Tiep}
\address{\textit{$^{4}$}~Department of Mathematics, The University of Danang - University of Science and Education, 459 Ton Duc Thang, Danang, Vietnam.}
\email{cvtiep@ued.udn.vn}

\subjclass[2010]{Primary 32H02; Secondary 32M17, 32F45.}
\keywords{Squeezing function, HHR domain, automorphism group, general ellipsoid}

\begin{abstract} 
In this paper, we prove that the general ellipsoid $D_P$ is holomorphically homogeneous regular provided that it is a $WB$-domain. Then, the uniform lower bound for the squeezing function near a $(P,r)$-extreme point is also given.
\end{abstract}

\maketitle

\section{introduction}
 Let $\Omega$ be a domain in $\mathbb C^n$ and $p \in \Omega$. For a holomorphic embedding $f\colon \Omega \to \mathbb B^n:=B(0;1)$ with $f(p)=0$, we set
$$\sigma_{\Omega,f}(p):=\sup\left \{r>0\colon B(0;r)\subset f(\Omega)\right\},$$
where $ B(z;r)\subset\mathbb{C}^n$  denotes the ball of radius $r$ with center at $z$. Then the \textit{squeezing function} $\sigma_{\Omega}\colon \Omega\to\mathbb R$ is defined in \cite{DGZ12} as
$$
\sigma_{\Omega}(p):=\sup_{f} \left\{\sigma_{\Omega,f}(p)\right\}.
$$
Notice that $0 < \sigma_{\Omega}(z)\leq 1$ for any $z \in \Omega$ and the squeezing function is obviously invariant under biholomorphisms. A domain $\Omega$ in $\mathbb C^n$ is called \emph{holomorphically homogeneous regular} (HHR) if $\inf_{z\in \Omega}\sigma_\Omega(z)>0$ (cf. \cite{LSY04,Yeung2009}, for example). It is worthwhile to note that the HHR property is clearly preserved by biholomorphisms and implies the completeness and the equivalence of the classical holomorphic invariant metrics such as the Carath\'{e}odory metric, the Kobayashi metric, the Bergman metric, and the K\"{a}hler-Einstein metric (see \cite{DGZ16, LSY04, Yeung2009}). Moreover, it is known that the class of  HHR domains includes bounded homogeneous domains, bounded convex domains, and $\mathcal{C}^2$-smooth bounded strongly pseudoconvex domains (see e.g., \cite{DWZZ20} and  the references therein). 

As a main result of this paper, we first present a new class of HHR domains, for instance, a nonconvex domain $\{(z_1,z_2)\in \mathbb C^2\colon |z_2|^2+|z_1|^8+|z_1|^2\mathrm{Re}(z_1^6)/2<1\}$ can be its typical element. To do this, for positive integers $m_1,\ldots, m_{n-1}$ let us consider a general ellipsoid $D_P$ in $\mathbb C^n\;(n\geq2)$, defined  by
\begin{equation*}
\begin{split}
D_P :=\{(z',z_n)\in \mathbb C^n\colon |z_n|^2+P(z')<1\},
\end{split}
\end{equation*}
where $P(z')$ is a $(1/m_1,\ldots,1/m_{n-1})$-homogeneous polynomial given by 
\begin{equation*}\label{series expression}
P(z')=\sum_{wt(K)=wt(L)=1/2} a_{KL} {z'}^K  \bar{z'}^L,
\end{equation*}
where $a_{KL}\in \mathbb C$ with $a_{KL}=\bar{a}_{LK}$, satisfying that $P(z')>0$ whenever $z'\ne 0$. Here and in what follows, $z':=(z_1,\ldots,z_{n-1})$ and $ wt(K):=\sum_{j=1}^{n-1} \frac{k_j}{2m_j}$ denotes the weight of any multi-index $K=(k_1,\ldots,k_{n-1})\in \mathbb N^{n-1}$ with respect to $\Lambda:=(1/m_1,\ldots,1/m_{n-1})$. (See Section \ref{section3} for the definition of weighted homogeneous polynomials.) The domain $D_P$ is called a WB-domain if $D_P$ is strongly pseudoconvex at every boundary point outside the set $\{(0',e^{i\theta})\colon \theta\in \mathbb R\}$ (cf. \cite{AGK16}).

Our first main result is the following theorem.
\begin{theorem}\label{Sqtheorem} Let $P$ be a weighted homogeneous polynomial with weight $(m_1,\ldots,m_{n-1})$ given by  \eqref{series expression} such that $P(z')> 0$ for all $z'\in \mathbb C^{n-1}\setminus\{0'\}$. If $D_P$ is a WB-domain, then $D_P$ is HHR.
\end{theorem}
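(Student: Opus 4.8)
The plan is to show $\inf_{z\in D_P}\sigma_{D_P}(z)>0$ by treating separately the strongly pseudoconvex part of $\partial D_P$ and the exceptional circle $C:=\{(0',e^{i\theta}):\theta\in\mathbb R\}$, the latter by a weighted (Pinchuk‑type) rescaling onto a model domain. First the routine reductions. Being positive and weighted homogeneous, $P$ is a proper exhaustion of $\mathbb C^{n-1}$: writing $z'=(r^{\alpha_1}\zeta_1,\dots,r^{\alpha_{n-1}}\zeta_{n-1})$ with $|\zeta'|=1$ and $\alpha_1,\dots,\alpha_{n-1}>0$ the weights (so $P(t^{\alpha_1}w_1,\dots,t^{\alpha_{n-1}}w_{n-1})=tP(w')$ for $t>0$), one has $P(z')=rP(\zeta')\ge c_0 r$ with $c_0:=\min_{|\zeta'|=1}P>0$ and $r\to\infty$ as $|z'|\to\infty$. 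Hence $D_P$ is bounded, $\sigma_{D_P}$ is positive and continuous on $D_P$ and thus bounded below on compacta, and it suffices to show that no sequence $z^{(\nu)}\in D_P$ with $z^{(\nu)}\to\partial D_P$ satisfies $\sigma_{D_P}(z^{(\nu)})\to0$. Passing to a subsequence, $z^{(\nu)}\to p\in\partial D_P$. The defining function $\rho:=|z_n|^2+P(z')-1$ is real-analytic with $d\rho\ne0$ on $\partial D_P$ (at a boundary point either $z_n\ne0$, whence $\partial\rho/\partial\bar z_n=z_n\ne0$, or $z_n=0$, whence $P(z')=1$ and $\nabla_{z'}P\ne0$ by Euler's identity for weighted-homogeneous functions), so $\partial D_P$ is a smooth real hypersurface, strongly pseudoconvex off $C$ by the $WB$ hypothesis. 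Since $\sigma_{D_P}\to1$ at $C^2$ strongly pseudoconvex boundary points \cite{DGZ16}, we would get $\sigma_{D_P}(z^{(\nu)})\to1$ if $p\notin C$; so $p\in C$. Finally, each $R_\theta(z',z_n)=(z',e^{i\theta}z_n)$ lies in $\mathrm{Aut}(D_P)$ and $\sigma_{D_P}$ is biholomorphically invariant, so after composing with a suitable $R_{-\theta_0}$ we may assume $p=p_0:=(0',1)$.

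It remains to prove $\liminf_{z\to p_0}\sigma_{D_P}(z)>0$, which is the core of the argument. In the coordinates $(z',w)$ with $z_n=1-w$ one has $D_P=\{|1-w|^2+P(z')<1\}=\{2\,\mathrm{Re}\,w>|w|^2+P(z')\}$, with $p_0\leftrightarrow(0',0)$, and every point of $D_P$ satisfies $(\mathrm{Im}\,w)^2\le|w|^2<2\,\mathrm{Re}\,w$. For $\varepsilon>0$ put $\Lambda_\varepsilon(z',w)=(\varepsilon^{-\alpha_1}z_1,\dots,\varepsilon^{-\alpha_{n-1}}z_{n-1},\varepsilon^{-1}w)$, an affine automorphism of $\mathbb C^n$; using $P(\varepsilon^{\alpha_1}z_1,\dots)=\varepsilon P(z')$ one finds $\Lambda_\varepsilon(D_P)=\{2\,\mathrm{Re}\,w>\varepsilon|w|^2+P(z')\}$, and these increase, as $\varepsilon\downarrow0$, to the weighted model
\[
M_P:=\{(z',w)\in\mathbb C^{n-1}\times\mathbb C:2\,\mathrm{Re}\,w>P(z')\}
\]
(the weight-$2$ term $|w|^2$ scales away). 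Given $z^{(\nu)}=(z'^{(\nu)},w^{(\nu)})\to(0',0)$, one normalizes it by the translation $w\mapsto w-i\,\mathrm{Im}\,w^{(\nu)}$, a dilation $\Lambda_{\varepsilon_\nu}$ with $\varepsilon_\nu$ comparable to the boundary distance of $z^{(\nu)}$, and (if necessary) an auxiliary real translation, obtaining biholomorphisms $\Psi_\nu$ of $\mathbb C^n$ such that $\Omega_\nu:=\Psi_\nu(D_P)$ converges to $M_P$ (up to a harmless translation of $M_P$) in the increasing/Carath\'eodory‑kernel sense, while the growth inequalities above and the properness of $P$ keep $q_\nu:=\Psi_\nu(z^{(\nu)})$ in a fixed compact subset $\mathcal K$ of $M_P$; after a further subsequence $q_\nu\to q_0\in\mathcal K$. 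Since $\Psi_\nu$ is biholomorphic, $\sigma_{D_P}(z^{(\nu)})=\sigma_{\Omega_\nu}(q_\nu)$.

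Two facts then conclude the proof. First, $M_P$ is biholomorphic to a bounded domain: the map $\Phi(z',w)=( z_1(1+w)^{-\alpha_1},\dots,z_{n-1}(1+w)^{-\alpha_{n-1}},(1-w)(1+w)^{-1})$ (principal branches; $\mathrm{Re}\,w>0$ on $M_P$) is a holomorphic embedding, and the bound $|z_j|\le c_0^{-\alpha_j}P(z')^{\alpha_j}$ together with $|1+w|\ge 1+\mathrm{Re}\,w>1+P(z')/2$ shows $\Phi(M_P)$ is bounded; hence $\sigma_{M_P}$ is positive and continuous and $c:=\min_{\mathcal K}\sigma_{M_P}>0$. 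Second, the squeezing function is lower semicontinuous along this convergence, i.e.\ $\liminf_\nu\sigma_{\Omega_\nu}(q_\nu)\ge\sigma_{M_P}(q_0)$: one restricts a near-extremal embedding $f:M_P\to\mathbb B^n$ (with $f(q_0)=0$, $B(0,r)\subset f(M_P)$, $r$ close to $\sigma_{M_P}(q_0)$) to $\Omega_\nu$, uses that for $r'<r$ the compact $\overline{B(0,r')}\subset f(M_P)=\bigcup_\nu f(\Omega_\nu)$ lies in $f(\Omega_\nu)$ for $\nu$ large, and then corrects the base point by a Möbius automorphism of $\mathbb B^n$ taking $f(q_\nu)$ to $0$, whose size is controlled by a Cauchy estimate because $q_\nu\to q_0$ and $\overline{B(q_0,\rho)}\subset\Omega_\nu$ eventually. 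Combining, $\liminf_\nu\sigma_{D_P}(z^{(\nu)})=\liminf_\nu\sigma_{\Omega_\nu}(q_\nu)\ge\sigma_{M_P}(q_0)\ge c>0$, contradicting $\sigma_{D_P}(z^{(\nu)})\to0$.

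The step I expect to require the most care is the rescaling: choosing $\varepsilon_\nu$ and the auxiliary translations so that the $\Omega_\nu$ genuinely exhaust $M_P$ (up to biholomorphism) while the rescaled base points stay in a fixed compact subset of $M_P$ and bounded away from $\partial M_P$. This is precisely the uniform lower bound for $\sigma$ near the $(P,r)$‑extreme point $p_0$; the reductions in the first paragraph and the two facts just above are essentially formal.
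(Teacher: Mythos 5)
Your preliminary reductions (boundedness of $D_P$, smoothness of $\partial D_P$, the rotation $R_{-\theta_0}$ reducing to $p_0=(0',1)$, the Cayley-type boundedness of your model $M_P$ — which is the paper's $E_P$ via Lemma \ref{automorphism-ellipsoid}) are fine, and so is your semicontinuity lemma \emph{as stated}, i.e.\ for an increasing family $\Omega_\nu\subset M_P$ exhausting $M_P$ with base points $q_\nu$ staying in a fixed compact of $M_P$. The genuine gap is exactly the step you flag and postpone: for sequences approaching $(0',1)$ \emph{tangentially}, no choice of your affine normalizations (imaginary/real translations in $w$ plus the weighted dilation $\Lambda_{\varepsilon_\nu}$ with $\varepsilon_\nu\approx\mathrm{dist}(z^{(\nu)},\partial D_P)$) achieves simultaneously: $q_\nu$ in a fixed compact of $M_P$, $\Omega_\nu\subset M_P$ (which your restriction-of-$f$ argument requires), and inner convergence of $\Omega_\nu$ to $M_P$. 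Concretely, take $z'^{(\nu)}=0'$, $w^{(\nu)}=s_\nu+it_\nu$ with $t_\nu^2=2s_\nu-2s_\nu^2$ (approach along the exceptional circle), so $\delta_\nu\approx s_\nu^2$: after killing $\mathrm{Im}\,w^{(\nu)}$ and dilating by $\varepsilon_\nu\approx\delta_\nu$, the base point is $(0',s_\nu/\varepsilon_\nu)$ with $s_\nu/\varepsilon_\nu\to\infty$ and the constant term $t_\nu^2/\varepsilon_\nu\to\infty$, so the $\Omega_\nu$ (though contained in $M_P$) drift off and contain no fixed compact; recentering by your auxiliary real translation restores kernel convergence but destroys $\Omega_\nu\subset M_P$, and mere kernel convergence does not yield $\liminf_\nu\sigma_{\Omega_\nu}(q_\nu)\ge\sigma_{M_P}(q_0)$, since a lower bound on $\sigma_{\Omega_\nu}$ needs global control of $\Omega_\nu$, not just that it contains large compacts. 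The complex-tangential case $P(z'^{(\nu)})/\delta_\nu\to\infty$ is worse: the $z'$-components of $q_\nu$ blow up, and translating in $z'$ changes the correct blow-up limit from $M_P$ to the (isotropically rescaled) ball at a nearby strongly pseudoconvex boundary point.

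This tangential difficulty is precisely what the paper's proof is designed to avoid: by Lemma \ref{dilation-property} (from \cite{NNTK19}), $\mathrm{Aut}(D_P)$ contains the non-affine M\"obius-type maps $\phi_{a,\theta}$, and $\phi_{p_n,0}$ sends an arbitrary $p=(p',p_n)\in D_P$ into the slice $\Sigma=\{(z',0)\in D_P\}$, whose closure meets $\partial D_P$ only at strongly pseudoconvex points; Theorem \ref{tendto1} plus Lemma \ref{interiorpoint-sq} give a uniform lower bound on $\Sigma$, and Proposition \ref{newtheorem} transports it to all of $D_P$ — no domain convergence is needed because the normalizing maps are exact automorphisms. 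The paper's Appendix makes your dichotomy explicit: $\Lambda$-nontangential sequences are sent to interior points of the slice, tangential ones to strongly pseudoconvex boundary points $(b,0)$ with $P(b)=1$ (in your circle example, $\phi_{a_{jn},0}$ sends the points to the center $(0',0)$). Your nontangential analysis essentially reproduces the paper's Section \ref{section4}; to complete your route you would have to either prove a semicontinuity statement valid without the containment $\Omega_\nu\subset M_P$ (false in this generality) or split off the tangential case and rescale at the nearby strongly pseudoconvex points, i.e.\ redo the Appendix argument with substantially more work. As it stands, the proposal is incomplete at its central step.
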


Thanks to the explicit description for the automorphism group of $D_P$, denoted by $\mathrm{Aut}(D_P)$, given in \cite{NNTK19}, we shall deduce the lower estimation of the squeezing function of $D_P$ to the subset $\{(z',0)\in D_P\colon P(z')=1\}$, whose closure intersects $\partial D_P$ at only strongly pseudoconvex boundary points. Then our proof of Theorem \ref{Sqtheorem} becomes a simple consequence of Theorem \ref{tendto1} as shown in Section \ref{section2}. 

As a consequence of Theorem \ref{Sqtheorem}, by \cite{DGZ16, LSY04, Yeung2009}, one obtains the following corollary.
\begin{corollary} The Carath\'{e}odory metric, the Kobayashi metric, the Bergman metric and the K\"{a}hler-Einstein metric on $D_P$ are all equivalent and complete, provided that $D_P$ is a WB-domain.
\end{corollary}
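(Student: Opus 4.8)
The plan is to read off the corollary from Theorem~\ref{Sqtheorem} and the standard theory of holomorphically homogeneous regular domains. Since $D_P$ is a WB-domain, Theorem~\ref{Sqtheorem} yields a constant
$$c_0:=\inf_{z\in D_P}\sigma_{D_P}(z)>0.$$
Fix any $0<c<c_0$. By definition of the squeezing function, for every $p\in D_P$ there is a holomorphic embedding $f_p\colon D_P\to\mathbb B^n$ with $f_p(p)=0$ and $B(0;c)\subset f_p(D_P)\subset\mathbb B^n$. Using the biholomorphic invariance of the Carath\'eodory, Kobayashi, Bergman, and K\"ahler--Einstein metrics together with their comparison behaviour under the nested inclusions $B(0;c)\subset f_p(D_P)\subset\mathbb B^n$ (distance-decreasing property for the Carath\'eodory and Kobayashi metrics, and the Cheng--Yau/Mok--Yau boundary estimates on the two balls for the Bergman and K\"ahler--Einstein metrics), one sandwiches the value at $p$ of each of the four metrics of $D_P$ between the corresponding value at the origin of the metric of $B(0;c)$ and of $\mathbb B^n$. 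On a ball all four metrics are mutually comparable with constants depending only on $n$ and $c$, so letting $p$ vary over $D_P$ produces uniform two-sided comparisons of the four metrics on all of $D_P$. This is exactly the mechanism of \cite{LSY04, Yeung2009}; see also \cite{DGZ16}.

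Completeness is handled in the same spirit. The Kobayashi metric always dominates the Carath\'eodory metric, so by the comparisons above it suffices to establish completeness for one of the four metrics. The HHR property forces Carath\'eodory (hence Kobayashi) completeness: the biholomorphically embedded ball of fixed radius $c$ about each point gives a uniform lower bound for the Carath\'eodory length of any curve leaving every compact subset of $D_P$. For the K\"ahler--Einstein metric one must first know it exists: a WB-domain is bounded and, being strongly pseudoconvex on a dense subset of its (real-analytic) boundary, is pseudoconvex by continuity of the Levi form; hence by the Cheng--Yau and Mok--Yau existence theorems $D_P$ carries a unique complete K\"ahler--Einstein metric of Ricci curvature $-1$, and its comparability with the Kobayashi metric on the HHR domain $D_P$ is contained in \cite{Yeung2009}. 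Completeness of all four metrics then follows from the comparisons of the first paragraph.

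There is no genuine obstacle beyond Theorem~\ref{Sqtheorem} itself: the content of the corollary is precisely the HHR conclusion of that theorem fed into the cited results. The only ingredients that are not purely formal consequences of biholomorphic invariance and localization to balls are the existence and boundary-estimate theory for the Bergman and K\"ahler--Einstein metrics (Cheng--Yau, Mok--Yau, and \cite{Yeung2009}), which we simply quote.
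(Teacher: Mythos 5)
Your proposal is correct and follows essentially the same route as the paper, which proves the corollary simply by combining the HHR conclusion of Theorem~\ref{Sqtheorem} with the cited results of \cite{DGZ16, LSY04, Yeung2009}; your extra details on the sandwiching mechanism and on K\"ahler--Einstein existence are an expansion of exactly what those references supply.
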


Next, as our second main result, we shall discuss the behavior of the squeezing function near a global extreme point. 
Let $\Omega$ be a domain in $\mathbb C^n$ with $\mathcal{C}^2$-smooth boundary near a boundary point $p\in \partial \Omega$. Let us recall that a boundary point $p$ is said to be \emph{spherically extreme} if there exists a ball $B(c(p);R)$ in $\mathbb C^n$ of some radius $R$, centered at some point $c(p)$ such that $\Omega\subset B(c(p); R)$ and $p\in \partial \Omega\cap \partial B(c(p); R)$ (see \cite{KZ16}). It was shown that $ \lim_{\Omega\ni q\to p}\sigma_\Omega(q)=1$ if $p$ is a spherically extreme boundary point of $\Omega$ (see \cite[Theorem $3.1$]{KZ16}). However, a spherically extreme boundary point does not exist generally, even for a smooth pseudoconvex domain of finite type. For example, the boundary point $(0',1)\in \partial D_P$ is not spherically extreme if $\min\{m_1,\ldots,m_{n-1}\}\geq 2$. 

Recently, for a smooth pseudoconvex domain of finite type, K. Diederich et al. \cite{DFW14} proved that for any bounded domain $\Omega\subset \mathbb C^n$, which is locally convexifiable and has finite $1$-type $2k$ at $p\in \partial \Omega$, having a Stein neighborhood basis, there exists a holomorphic embedding $f\colon \overline{\Omega}\to \mathbb C^n$ such that $f(p)$ is a global extreme point of type $2k$ for $\overline{f(\Omega)}$ in the sense that $f(\Omega)\subset \mathbb B_k^n:=\{z\in \mathbb C^n\colon |z_n|^2+\|z'\|^{2k}<1\}$ and  $f(p)=(0',1)$. Therefore, this motivates us to introduce the following definition.
\begin{define} Let $\Omega$ be a domain in $\mathbb C^n$ ($n\geq 2$), $p\in \partial \Omega$, $P(z')$ be the polynomial given in (\ref{series expression}), and $r\in (0,1]$. We say that $p$ is \emph{$(P,r)$-extreme} if there exists a holomorphic embedding $f\colon \overline{\Omega}\to \overline{E_P}$ such that $f(p)=(0',0)$ and $D(r)\subset f(\Omega)$, where 
\begin{equation*}
\begin{split}
E_P&:=\{(z',z_n)\in \mathbb C^n\colon P(z')<2\ \mathrm{Re}(z_n)\};\\
D(r)&:=D_{P,r}=\{(z',z_n)\in \mathbb C^n\colon |z_n-r|^2+P(z')<r^2\}.
\end{split}
\end{equation*}
In this situation, denote by $\Gamma(r,c):=f^{-1}(D(r)\cap \{(z',z_n)\in \mathbb C^n\colon|\mathrm{Im}(z_n)|\leq c |\mathrm{Re}(z_n)|\}$) for $c>0$.
\end{define} 
\begin{remark} Since $D(r')\subset D(r)$ for $0<r'<r\leq 1$, it follows that if $p$ is $(P,r)$-extreme, then it is also $(P,r')$-extreme for any $0<r'<r\leq 1$. In addition, $p$ is spherically extreme if and only if it is $(\|z'\|^2,r)$-extreme for some $0<r\leq 1$.
\end{remark}

Our second main result is the following theorem.
\begin{theorem}\label{maintheorem} Let $\Omega$ be a domain in $\mathbb C^n$ $(n\geq 2)$ and $p\in \partial \Omega$ be a $(P,r)$-extreme point with $0<r\leq 1$. Then, for any $0<r'<r$ and $c>0$ there exist $\epsilon_0, \gamma_0>0$ such that
$$
\sigma_\Omega(q)>\gamma_0,\; \forall q\in \Gamma(r',c)\cap B(p;\epsilon_0).
$$
\end{theorem}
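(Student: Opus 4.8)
The plan is to transport the problem to the unbounded model $E_P$, normalize the point there by a parabolic automorphism of $E_P$, and then exploit the fact that $E_P$ is biholomorphic to the \emph{bounded} domain $D_P$. By biholomorphic invariance of the squeezing function, $\sigma_\Omega(q)=\sigma_{f(\Omega)}(f(q))$; writing $U:=f(\Omega)$ and $w:=f(q)$, one has $D(r)\subseteq U\subseteq E_P$, and for $q\in\Gamma(r',c)\cap B(p;\epsilon_0)$ the point $w$ satisfies $w\in D(r')$, $|\mathrm{Im}(w_n)|\le c\,|\mathrm{Re}(w_n)|$, and $w\to(0',0)$ as $\epsilon_0\to 0$ (continuity of $f$ at $p$); note that $w\in D(r')$ forces $\mathrm{Re}(w_n)>0$. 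Fixing a constant $\epsilon_1>0$, I would set $\psi_w:=\rho_{t_w}\circ\tau_w$, where $\tau_w(z',z_n):=(z',z_n-i\,\mathrm{Im}(w_n))$, $\rho_t(z',z_n):=(t^{1/(2m_1)}z_1,\dots,t^{1/(2m_{n-1})}z_{n-1},tz_n)$, and $t_w:=\epsilon_1/\mathrm{Re}(w_n)$. Both factors preserve $E_P$ — for $\rho_{t_w}$ this is the weighted homogeneity $P(t^{1/(2m_1)}z_1,\dots,t^{1/(2m_{n-1})}z_{n-1})=tP(z')$, $t>0$ — so $\psi_w\in\mathrm{Aut}(E_P)$, and $q_w:=\psi_w(w)=(q_w',\epsilon_1)$ has $P(q_w')=t_wP(w')<2r'\epsilon_1$, since $w\in D(r')$ gives $P(w')<2r'\mathrm{Re}(w_n)$. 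Hence $q_w$ lies in the fixed set $K:=\{(z',\epsilon_1):P(z')\le 2r'\epsilon_1\}$, which is compact because $\{P\le C\}$ is compact for every $C>0$ (positivity and homogeneity of $P$); and since $r'<r$, $K$ is a compact subset of the open cone $E_P^{(r)}:=\{P(z')<2r\,\mathrm{Re}(z_n)\}$, so I would fix $\eta>0$ so small that the $\eta$-neighbourhood $K_\eta$ of $K$ has compact closure $\overline{K_\eta}\subset E_P^{(r)}$.

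The technical heart is to show that, after shrinking $\epsilon_0$, one has $K_\eta\subseteq\psi_w(D(r))$ for every relevant $w$. A direct computation gives
\[
\psi_w(D(r))=\Big\{(z',z_n):\ \tfrac{\mathrm{Re}(w_n)}{\epsilon_1}\,|z_n|^2+2\,\mathrm{Im}(w_n)\,\mathrm{Im}(z_n)+\tfrac{\epsilon_1}{\mathrm{Re}(w_n)}\,\mathrm{Im}(w_n)^2+P(z')<2r\,\mathrm{Re}(z_n)\Big\},
\]
and on the compact set $\overline{K_\eta}$ each of the first three summands on the left is $O(\mathrm{Re}(w_n))$, hence $o(1)$ uniformly as $\epsilon_0\to 0$; here the cone condition is exactly what makes this work, as it bounds $\tfrac{\epsilon_1}{\mathrm{Re}(w_n)}\mathrm{Im}(w_n)^2$ by $\epsilon_1 c^2\mathrm{Re}(w_n)$. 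Thus on $\overline{K_\eta}$ the left-hand side is $\le P(z')+\kappa(\epsilon_0)$ with $\kappa(\epsilon_0)\to 0$, which is $<2r\,\mathrm{Re}(z_n)$ once $\epsilon_0$ is small, by compactness of $\overline{K_\eta}$ inside $E_P^{(r)}$; therefore $K_\eta\subseteq\overline{K_\eta}\subseteq\psi_w(D(r))\subseteq\psi_w(U)$.

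To conclude, I would use that $E_P$ is biholomorphic to $D_P$. Since every monomial of $P$ has $wt(K)=wt(L)=1/2$, the Cayley-type map $\Phi(z',z_n):=\big(z_1/(1+z_n)^{1/m_1},\dots,z_{n-1}/(1+z_n)^{1/m_{n-1}},(1-z_n)/(1+z_n)\big)$ — with principal branches, legitimate because $\mathrm{Re}(z_n)>0$ on $E_P$ — satisfies $P(\Phi'(z))=|1+z_n|^{-2}P(z')$ and carries $E_P$ biholomorphically onto $\{|z_n|^2+2P(z')<1\}$, which is linearly equivalent to $D_P$; let $\Psi\colon E_P\to D_P$ be the composite and choose $R>0$ with $D_P\subseteq B(0;R)$. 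Then $g:=\Psi\circ\psi_w\circ f\colon\Omega\to D_P$ is a holomorphic embedding with $g(q)=\Psi(q_w)$, and
\[
B(\Psi(q_w);\eta')\subseteq\Psi(K_\eta)\subseteq g(\Omega)\subseteq D_P\subseteq B(\Psi(q_w);2R),
\]
where $\eta'>0$ is the ($w$-independent) minimum, over the compact set $\Psi(K)\ni\Psi(q_w)$, of the distance to $\mathbb C^n\setminus\Psi(K_\eta)$. Composing $g$ with $z\mapsto(z-\Psi(q_w))/(2R)$ yields an embedding of $g(\Omega)$ into $\mathbb B^n$ taking $\Psi(q_w)$ to $0$ whose image contains $B(0;\eta'/(2R))$; hence, by invariance, $\sigma_\Omega(q)=\sigma_{g(\Omega)}(\Psi(q_w))\ge\eta'/(2R)=:\gamma_0>0$, a quantity depending only on $P$, $r$, $r'$ (and the auxiliary choice $\epsilon_1$), not on $w$ or $\Omega$.

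The step I expect to be the main obstacle is the uniform inclusion $K_\eta\subseteq\psi_w(D(r))$ of the second paragraph: this is precisely where the restriction to the non-tangential region $\Gamma(r',c)$ is indispensable, since for a tangential approach the term $\tfrac{\epsilon_1}{\mathrm{Re}(w_n)}\mathrm{Im}(w_n)^2$ need not tend to $0$ and the inclusion can fail when $r'$ is close to $r$. (The facts that $\psi_w\in\mathrm{Aut}(E_P)$ and that $\Phi$ is a biholomorphism of $E_P$ onto the stated ellipsoid are elementary consequences of the weighted homogeneity of $P$, and are implicit in the description of $\mathrm{Aut}(D_P)$ from \cite{NNTK19}.)
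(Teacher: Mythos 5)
Your proof is correct and follows essentially the same route as the paper's: normalize the point by automorphisms of the model $E_P$ (weighted dilations) so that it lands in a fixed compact subset, use the sandwich $D(r)\subset f(\Omega)\subset E_P$ to show the rescaled $D(r)$ uniformly contains a fixed neighborhood of that compact set (this is exactly where the cone condition $|\mathrm{Im}(z_n)|\le c\,|\mathrm{Re}(z_n)|$ enters, as in the paper), transfer to the bounded realization $D_P$ via the Cayley-type map of Lemma \ref{automorphism-ellipsoid}, and conclude with the inner/outer radius bound of Lemma \ref{interiorpoint-sq}. The only differences are cosmetic: you insert an imaginary translation and normalize $\mathrm{Re}(z_n)=\epsilon_1$ rather than $\|z\|=1$, and you replace the paper's domain-convergence step ($\psi\circ\Lambda_\lambda(D(r))\to D^r$) by an explicit inclusion estimate, which makes the required uniformity more transparent but is the same idea.
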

\begin{remark}\label{nontangent} The convergence of a sequence of points in $\Gamma(r,c)$ to $p$ is exactly the $\Lambda$-nontangential convergence introduced in \cite[Definition $3.4$]{NN19}. For the case when $\{a_j\}\subset f^{-1}(D(r))\subset \Omega$ does not converge $\Lambda$-nontangentially to $p=0$, i.e., for any $0<r'<r$ and $c>0$ there exists $j_{r',c}\in \mathbb N$ such that $a_{j}\not \in \Gamma(r',c)$ for all $j\geq j_{r',c}$, we do not know the behavior of $\{\sigma_\Omega(a_j)\}$. Therefore, a natural question to ask is whether $\liminf\limits_{f^{-1}(D(r))\ni z\to p}\sigma_\Omega(z)>0$ holds.
\end{remark}

The organization of this paper is as follows: In Section~\ref{section2} we provide several properties of the squeezing function needed for a proof of Theorem \ref{Sqtheorem}. In Section \ref{section3}, we recall the explicit description for $\mathrm{Aut}(D_P)$ given in \cite{NNTK19} and the proof of Theorem \ref{Sqtheorem} is given. Then, we shall prove Theorem \ref{maintheorem} in detail in Section \ref{section4}. In addition, an alternative proof of Theorem \ref{Sqtheorem} will be presented in Appendix.
\section{Several properties of the squeezing function}\label{section2}
\smallskip
Let $\Omega$ be a bounded domain in $\mathbb C^n$. Denote $r(z,\Omega)$ and $R(z,\Omega)$, respectively, by
$$
r(z,\Omega)=\sup\{r>0\colon B(z;r)\subset \Omega\}\;\text{and}\; R(z,\Omega)=\inf\{R>0\colon B(z;R)\supset \Omega\}. 
$$
For a subset $K\subset \Omega$, we put 
$$
r(K,\Omega)=\inf_{z\in K} r(z,\Omega);\; R(K,\Omega)=\sup_{z\in K} R(z,\Omega).
$$
Now we prepare a technical lemma.
\begin{lemma}\label{interiorpoint-sq} Let $\Omega$ be a bounded domain in $\mathbb C^n$ and $K$ be a relative compact subset of $ \Omega$. Then one has $r(z,\Omega)/R(z,\Omega)\leq \sigma_\Omega(z)\leq 1$ for any $z\in \Omega$ and $\inf\limits_{z\in K} \sigma_\Omega(z)\geq \dfrac{r(K,\Omega)}{R(K,\Omega)}>0$.
\end{lemma}
\begin{proof} Let us consider an affine map $f(\zeta):=\dfrac{\zeta-z}{R(z,\Omega)}$. Then one sees that $f$ is a biholomorphic map from $\Omega$ into $\mathbb B^n$. Moreover,  we have
$$
B\left(0; r(z,\Omega)/R(z,\Omega)\right)\subset  f(B(z;r(z,\Omega)))\subset f(\Omega),
$$
and hence 
$$
\inf_{z\in K} \sigma_\Omega(z)\geq \inf_{z\in K} \dfrac{r(z,\Omega)}{R(z,\Omega)}=\dfrac{r(K,\Omega)}{R(K,\Omega)}>0.
$$
Therefore, the proof is complete.
\end{proof}
\begin{define} Let $\Omega$ be a bounded domain in $\mathbb C^n$ and $\Sigma$ be a subset of $\Omega$. We say that $\Omega$ is \emph{HHR on $\Sigma$} if $\inf_{z\in \Sigma}\sigma_\Omega(z)>0$. In particular, $\Omega$ is \emph{HHR} if it is HHR on $\Omega$.
\end{define} 
Next, we prepare a proposition which is crucial in our proof of Theorem \ref{Sqtheorem}.
\begin{proposition}\label{newtheorem} Let $\Omega$ be a bounded domain  in $\mathbb C^n$. Suppose that there exists a subset $\Sigma\subset \Omega$ satisfying that $\forall z\in \Omega \; \exists f\in \mathrm{Aut}(\Omega)$ such that $f(z)\in \Sigma$. Then $\Omega$ is HHR if it  is HHR on $\Sigma$.
\end{proposition}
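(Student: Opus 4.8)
The plan is to exploit the biholomorphic invariance of the squeezing function recorded in the introduction: since every automorphism $f\in\mathrm{Aut}(\Omega)$ is in particular a biholomorphism of $\Omega$ onto itself, one has $\sigma_\Omega(f(z))=\sigma_\Omega(z)$ for all $z\in\Omega$. Concretely, if $g\colon\Omega\to\mathbb B^n$ is a holomorphic embedding with $g(f(z))=0$ realizing the value $\sigma_\Omega(f(z))$ up to $\epsilon$, then $g\circ f\colon\Omega\to\mathbb B^n$ is again a holomorphic embedding with $(g\circ f)(z)=0$ and $(g\circ f)(\Omega)=g(\Omega)$, so it realizes the same lower bound at $z$; the reverse inequality follows by composing with $f^{-1}$ instead. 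Hence $\sigma_\Omega\circ f=\sigma_\Omega$.

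With this in hand the argument is immediate. Fix an arbitrary $z\in\Omega$ and, using the hypothesis on $\Sigma$, choose $f\in\mathrm{Aut}(\Omega)$ with $f(z)\in\Sigma$. Then
\[
\sigma_\Omega(z)=\sigma_\Omega(f(z))\ \geq\ \inf_{w\in\Sigma}\sigma_\Omega(w).
\]
Since $\Omega$ is HHR on $\Sigma$, the right-hand side is a strictly positive constant independent of $z$. Taking the infimum over $z\in\Omega$ yields $\inf_{z\in\Omega}\sigma_\Omega(z)\geq\inf_{w\in\Sigma}\sigma_\Omega(w)>0$, i.e.\ $\Omega$ is HHR.

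I do not expect a genuine obstacle here: the statement is essentially the conjunction of biholomorphic invariance of $\sigma_\Omega$ with the definition of HHR, and the only point requiring a word of care is that the family of admissible maps in the definition of $\sigma_\Omega(z)$ is nonempty — guaranteed by the boundedness of $\Omega$ via the affine embedding used in Lemma \ref{interiorpoint-sq} — so that all suprema and infima involved are taken over nonempty sets. This proposition will then be applied in Section \ref{section3} with $\Sigma$ taken to be (essentially) the slice $\{(z',0)\in D_P\colon P(z')=1\}$, on which $D_P$ is HHR by Theorem \ref{tendto1}, together with the explicit description of $\mathrm{Aut}(D_P)$, to deduce Theorem \ref{Sqtheorem}.
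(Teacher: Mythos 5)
Your argument is correct and is essentially identical to the paper's proof: both reduce to the biholomorphic invariance $\sigma_\Omega\circ f=\sigma_\Omega$ and then transport the uniform lower bound on $\Sigma$ to an arbitrary point via the hypothesized automorphism. The only difference is that you spell out the invariance (by composing embeddings with $f$ and $f^{-1}$), which the paper simply quotes as a known property from \cite{DGZ12}.
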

\begin{proof}
Suppose that $\Omega$ is HHR on $\Sigma$. By definition, there exists $c>0$ such that $\sigma_\Omega(z)\geq c$ for all $z\in \Sigma$. Now let $ z\in \Omega$ be arbitrary. We shall prove that $\sigma_\Omega(z)\geq c$. Indeed, by assumption there exists an automorphism $f\in \mathrm{Aut}(\Omega)$ such that $f(z)\in \Sigma$ and thus $\sigma_\Omega(z)=\sigma_\Omega(f(z))\geq c$ because of the invariance of the squeezing function under biholomorphisms. Hence, the proof is complete.
\end{proof}
Let us recall that a compact set $K\Subset \mathbb C^n$ is said to have a Stein neighborhood basis if for any domain $V$ containing $K$ there exists a pseudoconvex domain $\Omega_V$ such that $K\subset \Omega_V\subset V$. For instance, the closure $\overline{\Omega}$ of a smooth bounded pseudoconvex domain $\Omega$ in $\mathbb C^n$ has a (strong) Stein neighborhood basis if $\Omega$ has a defining function $\rho$ and there exists $\epsilon_0>0$ such that $\{z\in\mathbb C^n \colon \rho(z)<\epsilon\}$ is pseudoconvex for all $\epsilon\in [0,\epsilon_0]$ (cf. [Sa12]).

Recently, the behavior of the squeezing function near a strongly pseudoconvex boundary point was established.
\begin{theorem}[\cite{ DGZ16, DFW14,KZ16}]\label{tendto1}
 If a bounded domain $\Omega$ in $\mathbb C^n$ admitting a Stein neighborhood basis has a strongly pseudoconvex boundary point, say $p$, then $\lim_{z\to p}\sigma_\Omega(z)=1$. 
\end{theorem}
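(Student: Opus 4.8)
The plan is to deduce Theorem~\ref{tendto1} from the fact, already recalled in the introduction, that the squeezing function tends to $1$ at a spherically extreme boundary point, the bridge being built by straightening $\partial\Omega$ near $p$ and then globalizing that straightening with the aid of the Stein neighborhood basis. First I would use that $\partial\Omega$ is $\mathcal C^2$ and strongly pseudoconvex at $p$: by the standard local change of coordinates (Narasimhan's lemma) there are a neighborhood $U$ of $p$ and a biholomorphism of $U$ onto an open subset of $\mathbb C^n$ carrying $\Omega\cap U$ to a domain that is strongly convex near the image of $p$. In particular $\overline\Omega$ is locally convexifiable and of $1$-type $2$ at $p$, so, since $\overline\Omega$ has a Stein neighborhood basis by hypothesis, all the assumptions of the theorem of Diederich et al. \cite{DFW14} recalled above are satisfied with $2k=2$.

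Next I would invoke that theorem to obtain a holomorphic embedding $f\colon\overline\Omega\to\mathbb C^n$ with $f(\Omega)\subset\mathbb B^n$ and $f(p)=(0',1)$. Setting $\Omega':=f(\Omega)$ and taking the ball $B(0;1)$, one has $\Omega'\subset B(0;1)$ and $f(p)\in\partial\Omega'\cap\partial B(0;1)$; thus $f(p)$ is a spherically extreme boundary point of $\Omega'$ in the sense recalled in the introduction.

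To conclude, I would transport the statement back: the squeezing function is a biholomorphic invariant and $f$ is continuous on $\overline\Omega$ with $f(p)=(0',1)$, so for any sequence $z_j\to p$ in $\Omega$ we get $\sigma_\Omega(z_j)=\sigma_{\Omega'}(f(z_j))$ with $f(z_j)\to(0',1)$, and by Theorem~$3.1$ of \cite{KZ16} the right-hand side tends to $1$; hence $\lim_{z\to p}\sigma_\Omega(z)=1$. If a self-contained argument for the spherically extreme case were wanted, I would reproduce its mechanism: for $w\in\Omega'$ near $(0',1)$, compose the inclusion $\Omega'\hookrightarrow\mathbb B^n$ with the M\"obius automorphism $\varphi_w$ of $\mathbb B^n$ sending $w$ to the origin; since $\partial\Omega'$ is strongly convex at $(0',1)$, $\Omega'$ contains a ball of some fixed radius internally tangent to $\partial\mathbb B^n$ there, and $\varphi_w$ dilates this ball so as to exhaust $\mathbb B^n$, yielding $B(0;r_w)\subset\varphi_w(\Omega')$ with $r_w\to1$ as $w\to(0',1)$, whence $\sigma_{\Omega'}(w)\to1$.

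The main obstacle is the globalization step: producing a \emph{global} holomorphic embedding of $\overline\Omega$ that straightens $\partial\Omega$ to second order at $p$ is a genuinely global statement which fails without the Stein neighborhood basis, and its proof is the technical heart of \cite{DFW14}; in the present account I would take it as a black box. A secondary, easier point to keep in mind is that Theorem~\ref{tendto1} asserts only a statement about the limit at $p$, which is insensitive to the behavior of $\Omega'$ away from $(0',1)$, so no global convexity of $\Omega'$ need be established.
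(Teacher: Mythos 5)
Your proposal is correct and follows essentially the route the paper itself intends: the paper offers no proof of Theorem \ref{tendto1}, citing \cite{DGZ16,DFW14,KZ16}, and your argument (Narasimhan's lemma to get local convexifiability and $1$-type $2$ at $p$, the exposing theorem of \cite{DFW14} with $2k=2$ to make $f(p)$ a spherically extreme point of $f(\Omega)$, then \cite[Theorem 3.1]{KZ16} together with the biholomorphic invariance of the squeezing function) is precisely the assembly of those cited ingredients. So the proposal matches the paper's approach, with the two cited results used as black boxes just as the paper uses them.
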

As a consequence, Theorem \ref{tendto1}, together with Lemma \ref{interiorpoint-sq} and Proposition \ref{newtheorem}, implies the following:
\begin{corollary}\label{sqcorollary}Let $\Omega$ be a bounded domain in $\mathbb C^n$ admitting a Stein neighborhood basis. Suppose that there exists a subset $M\subset \Omega$ satisfying that $\forall z\in \Omega \; \exists f\in \mathrm{Aut}(\Omega)$ such that $f(z)\in M$.   If each $p\in \overline{M}\cap \partial \Omega$ is strongly pseudoconvex, then $\Omega$ is HHR. 
\end{corollary}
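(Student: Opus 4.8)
The plan is to reduce the statement to Proposition~\ref{newtheorem}. That proposition says that, under the stated orbit hypothesis on $M$, it suffices to prove that $\Omega$ is HHR on $M$, i.e. that $\inf_{z\in M}\sigma_\Omega(z)>0$; once this is known, invariance of $\sigma_\Omega$ under $\mathrm{Aut}(\Omega)$ propagates the bound to all of $\Omega$. So the entire content of the corollary is the uniform positive lower bound for $\sigma_\Omega$ on $M$, and I would establish it by a compactness-and-contradiction argument combining the interior estimate of Lemma~\ref{interiorpoint-sq} with the boundary behavior of Theorem~\ref{tendto1}.

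Concretely, suppose for contradiction that $\inf_{z\in M}\sigma_\Omega(z)=0$. Then there is a sequence $\{z_j\}\subset M$ with $\sigma_\Omega(z_j)\to 0$, and after passing to a subsequence we may assume $z_j\to z_0\in\overline{\Omega}$. I would split into two cases. If $z_0\in\Omega$, pick a closed ball $K=\overline{B(z_0;\delta)}\Subset\Omega$; then $z_j\in K$ for all large $j$, and Lemma~\ref{interiorpoint-sq} gives $\sigma_\Omega(z_j)\ge r(K,\Omega)/R(K,\Omega)>0$, contradicting $\sigma_\Omega(z_j)\to 0$. If instead $z_0\in\partial\Omega$, then $z_0\in\overline{M}\cap\partial\Omega$ because each $z_j\in M$, so by hypothesis $z_0$ is a strongly pseudoconvex boundary point of $\Omega$; since $\Omega$ admits a Stein neighborhood basis, Theorem~\ref{tendto1} yields $\lim_{z\to z_0}\sigma_\Omega(z)=1$, hence $\sigma_\Omega(z_j)\to 1$, again a contradiction. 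Therefore $\inf_{z\in M}\sigma_\Omega(z)>0$, and Proposition~\ref{newtheorem} then gives that $\Omega$ is HHR, completing the proof.

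The argument is essentially bookkeeping, and I do not expect a serious obstacle: the two nontrivial inputs — the interior comparison $\sigma_\Omega\ge r/R$ on relatively compact sets and the fact that $\sigma_\Omega\to 1$ at a strongly pseudoconvex point of a domain with a Stein neighborhood basis — are already available as Lemma~\ref{interiorpoint-sq} and Theorem~\ref{tendto1}. The only point deserving care is the dichotomy for the subsequential limit $z_0$ of $\{z_j\}$, together with the observation that a boundary limit of points of $M$ necessarily lies in $\overline{M}\cap\partial\Omega$, which is exactly the set on which strong pseudoconvexity has been assumed; this is what makes Theorem~\ref{tendto1} applicable in the second case.
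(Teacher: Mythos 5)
Your proposal is correct and follows essentially the same route as the paper: reduce to HHR on $M$ via Proposition~\ref{newtheorem}, then argue by contradiction with a subsequential dichotomy, handling an interior limit with Lemma~\ref{interiorpoint-sq} and a boundary limit (necessarily in $\overline{M}\cap\partial\Omega$, hence strongly pseudoconvex) with Theorem~\ref{tendto1}. Your write-up is if anything slightly more explicit than the paper's about why the interior case is excluded.
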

\begin{proof}
By Proposition \ref{newtheorem}, it suffices to show that $\Omega$ is HHR on $M$. Indeed, suppose otherwise that there exists a sequence $\{z_j\}\subset M$ such that $\sigma_\Omega(z_j)\to 0$ as $j\to \infty$. Taking a subsequence if necessary we may assume that either $z_j\to p\in \overline{M}\cap \partial \Omega$ or $\{z_j\}\Subset \Omega$. By virtue of Lemma \ref{interiorpoint-sq}, the latter case does not occur. On the other hand, for the former case one has $\sigma_\Omega(z_j)\to1$ as $j\to \infty$ by Theorem \ref{tendto1}, which is a contradiction. This ends the proof.

\end{proof}

\begin{remark} By Lemma \ref{interiorpoint-sq}, it is easy to see that $\Omega$ is HHR on any relative compact subset  $K\Subset \Omega$.  In addition, one can infer that if $\sigma_\Omega(p)=1$ for some  $p\in\Omega$, then $\Omega$ is biholomorphic to the unit open ball (cf.  \cite{DGZ12}). Moreover, it follows from Corollary \ref{sqcorollary} that $\Omega$ is HHR if each $p\in \overline{\Omega/\mathrm{Aut}(\Omega)}\cap \partial \Omega$ is strongly pseudoconvex.
\end{remark}

\section{Squeezing function of  the general ellipsoid} \label{section3}
\smallskip
Let us assign weights $\frac{1}{2m_1}, \ldots,\frac{1}{2m_{n-1}}, 1$ to the variables $z_1,\ldots, z_{n-1}, z_n$, respectively and denote by $wt(K):=\sum_{j=1}^{n-1} \frac{k_j}{2m_j}$ the weight of an $(n-1)$-tuple $K=(k_1, \ldots, k_{n-1})\in \mathbb Z^{n-1}_{\geq0}$. A real-valued polynomial $P$ on $\mathbb C^{n-1}$ is called a \emph{weighted homogeneous polynomial with weight} $(m_1, \ldots ,m_{n-1})$~(or simply \emph{$(1/m_1,\ldots,1/m_{n-1})$-homogeneous}), if
$$
P(t^{1/2m_1}z_1, \ldots,t^{1/2m_{n-1}}z_{n-1}) = t P(z_1, \ldots , z_{n-1})~\text{for all}~ z'\in \mathbb C^{n-1} ~\text{and}~t>0.
$$
In the case when $m = m_1 =\cdots = m_{n-1}$, then $P$ is called \emph{homogeneous of degree} $2m$. We note that if $P(z')$ is a $(1/m_1,\ldots,1/m_{n-1})$-homogeneous polynomial, then 
\begin{equation*}\label{2018-0}
P(z')=\sum_{wt(K)+wt(L)=1} a_{KL} {z'}^K  \bar{z'}^L,
\end{equation*}
where $a_{KL}\in \mathbb C$ with $a_{KL}=\bar{a}_{LK}$~(see \cite{NNTK19}). 

Throughout this paper,  let $P(z')$ be a $(1/m_1,\ldots,1/m_{n-1})$-homogeneous polynomial given by 
\begin{equation}\label{series expression}
P(z')=\sum_{wt(K)=wt(L)=1/2} a_{KL} {z'}^K  \bar{z'}^L,
\end{equation}
where $a_{KL}\in \mathbb C$ with $a_{KL}=\bar{a}_{LK}$, satisfying that $P(z')>0$ whenever $z'\ne 0'$. In addition, since $P(z')>0$ for $z'\ne 0'$ and by the weighted homogeneity, there are two constants $c_1,c_2>0$ such that
$$
c_1 \sigma_\Lambda(z') \leq P(z')\leq c_2 \sigma_\Lambda(z'),
$$
where $\sigma_\Lambda(z')=\sum_{j=1}^{n-1}|z_j|^{2m_j}$ (cf. \cite[Lemma 6]{NNTK19}, \cite{Yu95}). Furthermore, one sees that $\overline{D_P}$ has a Stein neighborhood basis.

We first consider the general ellipsoid $D_P$ and the model $E_P$ in $\mathbb C^n\;(n\geq2)$, defined respectively by
\begin{equation*}
\begin{split}
D_P &:=\{(z',z_n)\in \mathbb C^n\colon |z_n|^2+P(z')<1\};\\
E_P&:=\{(z',z_n)\in \mathbb C^n\colon P(z')<2\ \mathrm{Re}(z_n)\}.
\end{split}
\end{equation*}
Then, we need the following lemma which is essentially well-known~(cf.~\cite{BP94}).
\begin{lemma}\label{automorphism-ellipsoid}
Let $P$ be a weighted homogeneous polynomial with weight $(m_1,\ldots,m_{n-1})$ given by  \eqref{series expression} such that $P(z')>0$ for all $z'\in \mathbb C^{n-1}\setminus\{0'\}$. Then,  the holomorphic map $\psi $ defined by
\[
(z',z_n)\mapsto  \left( \frac{2^{1/2m_1}}{(1+z_n)^{1/m_1}} z_1,\ldots,  \frac{2^{1/2m_{n-1}}}{(1+z_n)^{1/m_{n-1}}} z_{n-1},  \frac{1-z_n}{1+z_n}\right),
\]
is a biholomorphism from $D_P$ onto $E_P$.
\end{lemma}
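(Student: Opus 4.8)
The plan is to verify directly that the proposed map $\psi$ is a well-defined biholomorphism by exhibiting its inverse and checking that the defining inequalities correspond. First I would observe that the Cayley-type component $w_n := (1-z_n)/(1+z_n)$ is the standard biholomorphism sending the unit disc $\{|z_n|<1\}$ onto the right half-plane $\{\mathrm{Re}(w_n)>0\}$, with inverse $z_n = (1-w_n)/(1+w_n)$; in particular $1+z_n$ never vanishes on $D_P$ (since $|z_n|<1$ there), so the fractional powers $(1+z_n)^{1/m_j}$ are well defined once we fix the branch of $\zeta \mapsto \zeta^{1/m_j}$ that is holomorphic on the right half-plane $\mathrm{Re}(\zeta)>0$ and equals $1$ at $\zeta=1$. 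This makes each component $w_j := 2^{1/2m_j}(1+z_n)^{-1/m_j} z_j$ a holomorphic function on $D_P$, so $\psi$ is holomorphic.

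Next I would compute $P(w')$ in terms of $(z',z_n)$ using the weighted homogeneity. Writing $w_j = (2/(1+z_n)^2)^{1/2m_j} z_j$ and applying the $(1/m_1,\ldots,1/m_{n-1})$-homogeneity of $P$ with the positive parameter playing the role of $t$, one gets $P(w') = \dfrac{2}{|1+z_n|^2}\,P(z')$ — here one must be slightly careful that the homogeneity identity is stated for $t>0$ real, whereas $2/(1+z_n)^2$ is complex; this is handled by noting $P$ is built from monomials ${z'}^K\bar{z'}^L$ with $wt(K)=wt(L)=1/2$, so each monomial picks up a factor $\big(2/(1+z_n)^2\big)^{wt(K)}\big(\overline{2/(1+z_n)^2}\big)^{wt(L)} = (2/(1+z_n)^2)^{1/2}\,\overline{(2/(1+z_n)^2)^{1/2}} = 2/|1+z_n|^2$, uniformly in $K,L$. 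Then a short algebraic identity, $2\,\mathrm{Re}(w_n) - \dfrac{2}{|1+z_n|^2} = \dfrac{2\,\mathrm{Re}(1-z_n)(\overline{1+z_n}) - 2}{|1+z_n|^2} = \dfrac{2(1-|z_n|^2)}{|1+z_n|^2} - \dfrac{2(1 - |z_n|^2) + \cdots}{}$ — more cleanly, $2\mathrm{Re}(w_n) = \dfrac{2(1-|z_n|^2)}{|1+z_n|^2}$ — shows that the inequality $P(w') < 2\mathrm{Re}(w_n)$ is equivalent to $\dfrac{2P(z')}{|1+z_n|^2} < \dfrac{2(1-|z_n|^2)}{|1+z_n|^2}$, i.e. to $P(z') + |z_n|^2 < 1$. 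Hence $\psi(D_P) \subseteq E_P$ and $\psi$ maps $\partial D_P$-type level sets to $\partial E_P$-type level sets.

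For surjectivity and injectivity I would write down the inverse explicitly: given $(w',w_n) \in E_P$, set $z_n = (1-w_n)/(1+w_n)$ (well defined since $\mathrm{Re}(w_n)>0$ forces $w_n \ne -1$, and $1+z_n = 2/(1+w_n) \ne 0$), and $z_j = 2^{-1/2m_j}(1+z_n)^{1/m_j} w_j = (1+w_n)^{-1/m_j} w_j$ up to the appropriate constant, using again the half-plane branch of the root. One checks $\psi^{-1}\circ\psi = \mathrm{id}$ and $\psi\circ\psi^{-1}=\mathrm{id}$ by direct substitution, and the computation of the previous paragraph run backwards shows $\psi^{-1}(E_P)\subseteq D_P$. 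Since both $\psi$ and $\psi^{-1}$ are holomorphic and mutually inverse, $\psi$ is a biholomorphism from $D_P$ onto $E_P$.

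I expect the only real subtlety — the "hard part," such as it is — to be the bookkeeping with the multivalued powers $(1+z_n)^{1/m_j}$: one must commit to a single holomorphic branch on the simply connected half-plane $\{\mathrm{Re}(\zeta)>0\}$ (equivalently, on the image of $1+z_n$ over $D_P$, which lies in that half-plane) and then verify that the factor each monomial of $P$ acquires collapses to the branch-independent quantity $2/|1+z_n|^2$; once the branch is fixed consistently in both $\psi$ and $\psi^{-1}$, everything else is the routine Cayley-transform computation sketched above. Everything here is elementary, which is consistent with the lemma being labeled "essentially well-known."
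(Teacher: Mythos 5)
Your proposal is correct and follows essentially the same route as the paper: the Cayley-type identity $2\,\mathrm{Re}\bigl(\tfrac{1-z_n}{1+z_n}\bigr)=\tfrac{2(1-|z_n|^2)}{|1+z_n|^2}$ together with the monomial computation $P(w')=\tfrac{2}{|1+z_n|^2}P(z')$ (valid because $wt(K)=wt(L)=1/2$), so that the defining inequalities of $E_P$ and $D_P$ correspond. Your extra care with the half-plane branch of the roots and the explicit inverse (which, with the constants tracked correctly, shows $\psi^{-1}=\psi$, as the paper notes in its remark) only makes explicit what the paper leaves as "the conclusion easily follows."
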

\begin{proof} Indeed, a direct computation shows that
\begin{equation*}
\begin{split}
\mathrm{Re}\left(\frac{1-z_n}{1+z_n} \right)&=\mathrm{Re}\left(\frac{(1-z_n)(1+\bar z_n)}{|1+z_n|^2} \right)\\
&=\frac{1-|z_n|^2}{|1+z_n|^2}.
\end{split}
\end{equation*}
Moreover, since $P$ has the form as in \eqref{series expression}, it follows that
\[
P \left( \frac{2^{1/2m_1}}{(1+z_n)^{1/m_1}} z_1,\ldots,  \frac{2^{1/2m_{n-1}}}{(1+z_n)^{1/m_{n-1}}} z_{n-1}\right)=\frac{2}{|1+ z_n|^2} P(z').
\]
 Therefore, one can deduce that
\[
 2\mathrm{Re}\left(\frac{1-z_n}{1+z_n} \right) - P \left( \frac{2^{1/2m_1}}{(1+z_n)^{1/m_1}} z_1,\ldots,  \frac{2^{1/2m_{n-1}}}{(1+z_n)^{1/m_{n-1}}} z_{n-1}\right)   >0 
\]if and only if 
\[
|z_n|^2+P(z')<1.
\]
Hence, the conclusion easily follows from the previous relation.
\end{proof}
\begin{remark} A direct computation also shows that $\psi^{-1}=\psi$, $\psi(0',0)=(0',1)$ and $\psi(0',1)=(0',0)$. In addition, $\psi(z)\to  (0',-1)$ as $ E_P\ni z\to \infty$.
\end{remark} 
Next, the first author et al. \cite{NNTK19} pointed out the following lemma.
\begin{lemma}[see Lemma 7 in \cite{NNTK19}]\label{dilation-property} Let $P$ be a weighted homogeneous polynomial with weight $(m_1,\ldots,m_{n-1})$ given by  \eqref{series expression} such that $P(z')> 0$ for all $z'\in \mathbb C^{n-1}\setminus\{0'\}$. Then, $\mathrm{Aut}(D_P)$ contains the following automorphisms $\phi_{a,\theta}$, defined by
\begin{equation}\label{formautomorphism}
(z',z_n)\mapsto  \left( \frac{(1-|a|^2)^{1/2m_1}}{(1-\bar{a}z_n)^{1/m_1}} z_1,\ldots,  \frac{(1-|a|^2)^{1/2m_{n-1}}}{(1-\bar{a}z_n)^{1/m_{n-1}}} z_{n-1}, e^{i\theta} \frac{z_n-a}{1-\bar{a}z_n}\right),
\end{equation}
where $a\in\Delta:=\{z\in \mathbb C\colon |z|<1\}$ and $\theta\in \mathbb R$. In addition, $\mathrm{Aut}(E_P)$ contains the dilations $\Lambda_\lambda$, $\lambda>0$, defined by
\begin{align*}
\Lambda_{\lambda}(z',z_n)=\left(\frac{z_1}{\lambda^{1/2m_1}},\ldots,\frac{z_{n-1}}{\lambda^{1/2m_{n-1}}},\frac{z_n}{\lambda}\right).
\end{align*}
\end{lemma}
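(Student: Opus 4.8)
The plan is to verify both assertions by direct substitution; the whole point is the weighted homogeneity of $P$ together with the special shape of the expansion \eqref{series expression}, namely that every monomial $z'^K\bar{z'}^L$ occurring in $P$ has $wt(K)=wt(L)=1/2$.

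I would dispose of the dilations first. For $\lambda>0$ the map $\Lambda_\lambda$ is linear and invertible with inverse $\Lambda_{1/\lambda}$, so it suffices to check $\Lambda_\lambda(E_P)\subset E_P$. Writing $w=\Lambda_\lambda(z',z_n)$, the weighted homogeneity of $P$ gives $P(w')=P(\lambda^{-1/2m_1}z_1,\dots,\lambda^{-1/2m_{n-1}}z_{n-1})=\lambda^{-1}P(z')$, while $2\,\mathrm{Re}(w_n)=\lambda^{-1}\cdot 2\,\mathrm{Re}(z_n)$; hence $P(w')<2\,\mathrm{Re}(w_n)$ if and only if $P(z')<2\,\mathrm{Re}(z_n)$, so $\Lambda_\lambda\in\mathrm{Aut}(E_P)$.

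For the maps $\phi_{a,\theta}$ with $a\in\Delta$ and $\theta\in\mathbb{R}$, I would first record that $\{1-\bar a z_n\colon z_n\in\Delta\}$ is the disc of radius $|a|<1$ centred at $1$, hence lies in the right half-plane; therefore the principal branches of $(1-\bar a z_n)^{1/m_j}$ are holomorphic and nonvanishing on (the $z_n$-projection of) $D_P$, the map $\phi_{a,\theta}$ is a well-defined holomorphic map on $D_P$, and each scaling factor equals $1$ at $z_n=0$. Put $w=\phi_{a,\theta}(z',z_n)$, so that $|w_n|^2=|z_n-a|^2/|1-\bar a z_n|^2$. The key step is the transformation of $P$: each scaling factor of $\phi_{a,\theta}$ equals $\mu^{1/2m_j}$ with $\mu=(1-|a|^2)/(1-\bar a z_n)^2$, and since every monomial of $P$ has $wt(K)=wt(L)=1/2$, the substitution $z_j\mapsto\mu^{1/2m_j}z_j$ multiplies $z'^K$ by $\mu^{1/2}$ and $\bar{z'}^L$ by $\bar\mu^{1/2}$, hence each term $a_{KL}z'^K\bar{z'}^L$ by $\mu^{1/2}\bar\mu^{1/2}=|\mu|$; therefore $P(w')=|\mu|\,P(z')=(1-|a|^2)\,P(z')/|1-\bar a z_n|^2$. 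Combining the two computations gives
$$
|w_n|^2+P(w')=\frac{|z_n-a|^2+(1-|a|^2)P(z')}{|1-\bar a z_n|^2},
$$
and the elementary identity $|1-\bar a z_n|^2-|z_n-a|^2=(1-|a|^2)(1-|z_n|^2)$ then shows $|w_n|^2+P(w')<1$ is equivalent to $|z_n|^2+P(z')<1$. Thus $\phi_{a,\theta}$ maps $D_P$ biholomorphically into $D_P$; solving the Möbius relation $w_n=e^{i\theta}(z_n-a)/(1-\bar a z_n)$ for $z_n$ shows $\phi_{a,\theta}^{-1}$ is again of the form $\phi_{a',\theta'}$, so applying the same estimate to the inverse yields surjectivity, and $\phi_{a,\theta}\in\mathrm{Aut}(D_P)$.

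The only point requiring care is the bookkeeping for the fractional powers: one must confirm that the principal branches are globally single-valued and holomorphic on $D_P$ (which follows from $1-\bar a z_n$ staying in the right half-plane and from $\sum_j k_j/2m_j=1/2<1$, so the partial powers compose to $\mu^{1/2}$ without wrapping), and that $P$ transforms by the modulus $|\mu|$ rather than by some power of $\mu$ — and this is precisely the place where the hypothesis $wt(K)=wt(L)=1/2$, not merely $wt(K)+wt(L)=1$, is used. Everything else reduces to the standard disc-automorphism identities, and the positivity $P(z')>0$ for $z'\neq 0'$ is needed only so that $D_P$ and $E_P$ are genuine (bounded, resp.\ unbounded) domains.
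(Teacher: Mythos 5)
Your verification is correct. The paper itself gives no proof of this lemma --- it is quoted from Lemma~7 of \cite{NNTK19} --- so there is nothing internal to compare against; but your substitution argument is exactly the computation the paper performs for the Cayley-type map $\psi$ in Lemma~\ref{automorphism-ellipsoid}: the hypothesis $wt(K)=wt(L)=1/2$ forces each monomial to pick up the factor $\mu^{1/2}\bar\mu^{1/2}=|\mu|$, so $P$ transforms by $(1-|a|^2)/|1-\bar a z_n|^2$, and the disc identity $|1-\bar a z_n|^2-|z_n-a|^2=(1-|a|^2)(1-|z_n|^2)$ closes the argument, with the branch bookkeeping justified by $\mathrm{Re}(1-\bar a z_n)>0$. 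Your handling of invertibility (explicit $\Lambda_{1/\lambda}$, and the inverse M\"obius parameters $a'=-ae^{i\theta}$, $\theta'=-\theta$ together with the identity $1-\bar{a'}w_n=(1-|a|^2)/(1-\bar a z_n)$, which makes the first $n-1$ scaling factors cancel) is only sketched but is routine and correct.
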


Now we are ready to prove Theorem \ref{Sqtheorem}.
\begin{proof}[Proof of Theorem \ref{Sqtheorem}]
Denote by $\Sigma:=\{(z',0)\in D_P\colon P(z')=1\}$. Then for each $p=(p',p_n)\in D_P$, one has $\phi_{p_n, 0}(p)\in \Sigma$, where $\phi_{p_n, 0}$ is an automorphism given in (\ref{formautomorphism}). Since each boundary point of $\overline{\Sigma}\cap \partial D_P$ is strongly pseudoconvex and the asymptotic limit value of the squeezing function is $1$ by Theorem \ref{tendto1}, it follows that $\sigma_{D_P}(z)$ is uniformly bounded away from zero on $\Sigma$. Therefore, the assertion finally follows by Proposition \ref{newtheorem}.
\end{proof}
\section{Proof of Theorem \ref{maintheorem}}\label{section4}
\smallskip
This section is fully devoted to the proof of Theorem \ref{maintheorem}. Let $\Omega$ be a domain described in the hypothesis of Theorem \ref{maintheorem}. That is, $p\in \partial \Omega$ is a $(P,r)$-extreme point with $0<r\leq 1$. Then, by virtue of the invariance of the squeezing function under biholomorphisms, we may assume that $D(r)\subset \Omega\subset E_P$ and $p=(0',0)$. Let us fix $0<r'<r$, $c>0$. Then $\Gamma(r',c)$ becomes $\Gamma(r',c)=D(r')\cap \{(z',z_n)\in \mathbb C^n\colon|\mathrm{Im}(z_n)|\leq c |\mathrm{Re}(z_n)|\}$.
\begin{center}
\begin{figure}[ht]

	\def\svgwidth{0.6\columnwidth}
	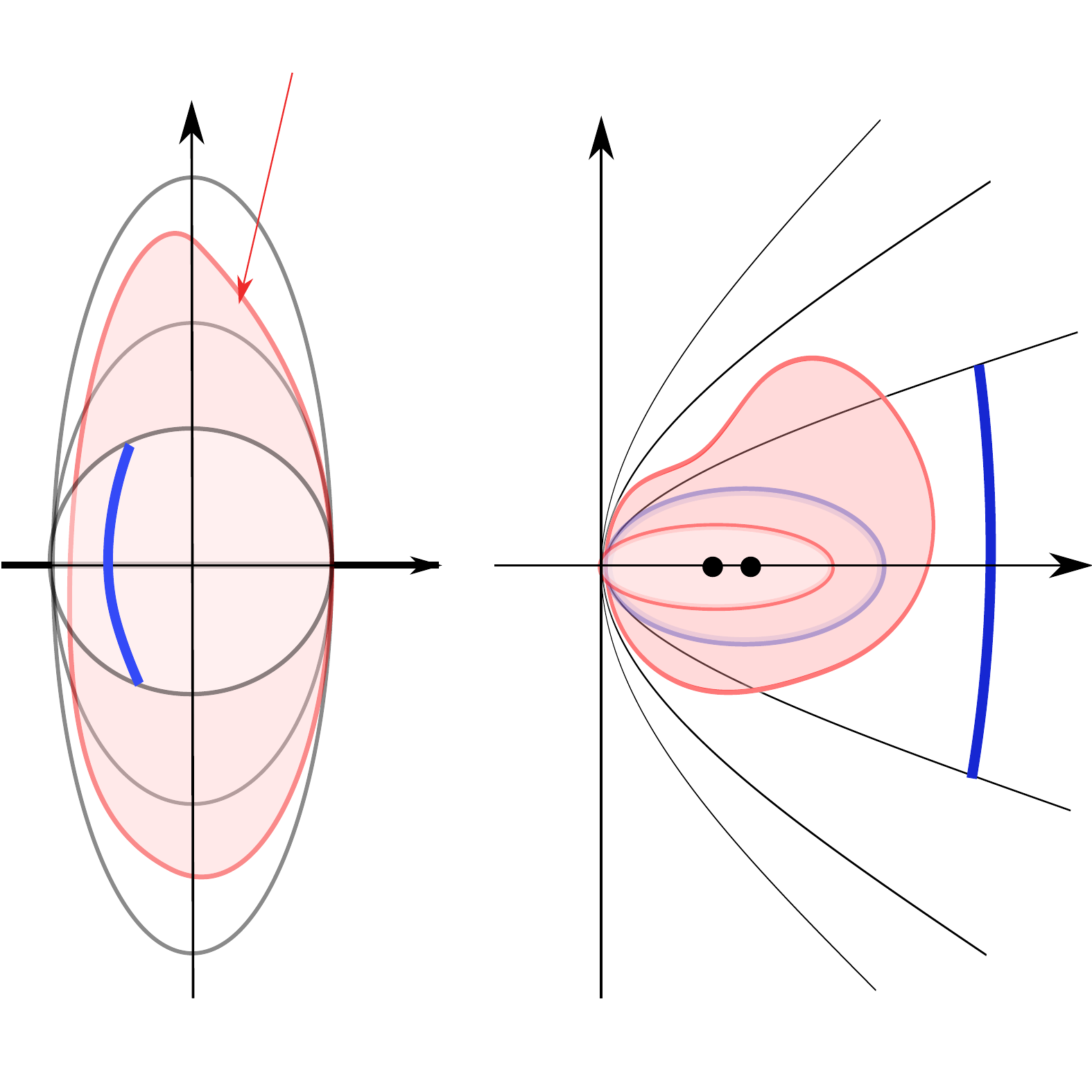
\caption{}\label{Figure1}
\end{figure}
\end{center}
 Let us denote by $E^r:=E_{P/r}, E:=E^1=E_P$, $D^r:=\{(z',z_n)\in\mathbb C^n\colon |z_n|^2+P(z')/r<1\}, D:=D^1=D_P$ for simplicity.  Then one can see that $ D(r')\subset D(r)\subset \Omega\subset E$ and $D(r')\subset E^{r'}$. Moreover, the biholomorphism $\psi$, given in Lemma \ref{automorphism-ellipsoid},  maps $E, E^r, E^{r'}$ onto $D, D^r, D^{r'}$, respectively (see Lemma \ref{automorphism-ellipsoid} and Figure \ref{Figure1} above). Furthermore, by Lemma \ref{dilation-property}, $E, E^r, E^{r'}$ are invariant under the action of $\Lambda_\lambda$, defined by
 $$
\Lambda_{\lambda}(z):=\left(\frac{z_1}{\lambda^{1/2m_1}},\ldots,\frac{z_{n-1}}{\lambda^{1/2m_{n-1}}},\frac{z_n}{\lambda}\right),\quad \lambda>0.
$$
In addition, a straightforward calculation shows that
$$
\Lambda_{\lambda}(D(r))= \left\{(z',z_n)\in \mathbb C^n\colon \lambda |z_n|^2+P(z')<2r\mathrm{Re}(z_n) \right\},
$$
which converges to $E^r$ as $\lambda\to 0^+$. Moreover, $\psi\circ \Lambda_{\lambda}(D(r))$ converges to $D^r$ as $\lambda\to 0^+$.
  Here and in what follows, a family of domains $\{X_\lambda\}_{\lambda\in \mathbb R^+}$ is said to \emph{converge to a domain $X$ as $\lambda\to 0^+$} if for each compact subset $K\Subset X$, there exists $\lambda_0=\lambda_0(K)$ such that $K\subset X_\lambda$ for all $0<\lambda< \lambda_0$.

Now let us define a set 
\begin{align*}
\widetilde\Sigma&:=\left\{z=(z',z_n)\in E^{r'}\colon  \|z\|=1, |\mathrm{Im}(z_n)|\leq c |\mathrm{Re}(z_n)|\right\}\\
&=\left\{z=(z',z_n)\in \mathbb C^n\colon  P(z')<2r'\mathrm{Re}(z_n),\|z\|=1,  |\mathrm{Im}(z_n)|\leq c |\mathrm{Re}(z_n)|\right\}\Subset E^r
\end{align*}
and then $ \Sigma:=\psi(\widetilde \Sigma)\subset D^{r'}$. Then, one has $\widetilde \Sigma\Subset E^{r}$, and thus $\Sigma\Subset D^{r}$. In particular, for any $q\in E^{r'}$ with $|\mathrm{Im}(q_n)|<c |\mathrm{Re}(q_n)|$, the orbit $\{\Lambda_\lambda(q)\}_{\lambda\in \mathbb R^+}$ meets the set $\widetilde\Sigma$ in a (unique) point. 

Let $q\in \Gamma(r',c) \cap B(0;\epsilon_0)\subset E^{r'}\cap B(0;\epsilon_0)$ be arbitrary, where $\epsilon_0>0$ will be chosen later. Then, there exists $\lambda>0$ such that $\Lambda_\lambda (q)\in \widetilde \Sigma$, i.e., $\|\Lambda_\lambda (q) \|=1$. Notice that $\lambda\to 0^+$ whenever $q\to p=(0',0)$. In addition, since $\psi\circ \Lambda_{\lambda}(D(r))$ converges to $D^r$ as $\lambda\to 0^+$, it follows that $\epsilon_0>0$ can be chosen so that we always have $\Sigma\Subset \psi\circ\Lambda_{\lambda}(D(r))\subset \psi\circ\Lambda_{\lambda}(\Omega)$ and
$$
\mathrm{dist}\left( \Sigma, \partial \big(\psi\circ\Lambda_{\lambda}(\Omega)\big)\right)\geq \mathrm{dist}\left( \Sigma, \partial \big(\psi\circ\Lambda_{\lambda}(D(r)\big)\right)>\mathrm{dist}(\Sigma, \partial D^r)/2>0
$$
for any $q\in D(r')\cap B(0;\epsilon_0)$. Here and in what follows, $\mathrm{dist}(A,B)$ denotes the Euclidean distance between two subsets $A,B \subset \mathbb C^n$.

In summary, the biholomorphism $G_\lambda:=\psi\circ \Lambda_\lambda$ from $E$ onto $D$ satisfies the following properties:
\begin{itemize} 
\item[a)] $G_\lambda(\Omega)\subset D$;
\item[b)] $G_\lambda(q)\in \Sigma \Subset D^r$;
\item[c)] $\mathrm{dist}\left( G_\lambda(q) ,\partial \big(G_\lambda(\Omega)\big)\right)>\delta$,
\end{itemize}
where $\delta:=\mathrm{dist}(\Sigma, \partial D^r)/2$. Therefore, by Lemma \ref{interiorpoint-sq} and again by the invariance of the squeezing function under biholomorphisms, we conclude that 
$$
\sigma_\Omega(q)=\sigma_{G_\lambda(\Omega)}(G_\lambda(q))>\delta/d>0, \;\forall\; q\in \Gamma(r',c) \cap B(0;\epsilon_0),
$$
where $d$ denotes the diameter of $D_P$. This ends the proof of Theorem \ref{maintheorem} with $\gamma_0:=\delta/d$. \hfill $\Box\;$
\section*{Appendix}
In this Appendix, we shall present an alternative proof of Theorem \ref{Sqtheorem} by the following argument. Indeed, suppose otherwise that there exists a sequence $\{a_j=(a_j',a_{jn})\}\subset D_P$ such that $a_j\to p\in \partial D_P$ and $\sigma_{D_P}(a_j)\to 0$ as $j\to \infty$. If $p$ is strongly pseudoconvex, then $\sigma_{D_P}(a_j)\to 1$ as $j\to \infty$ by Theorem \ref{tendto1}. Hence our conclusion immediately follows from this case. Moreover, since $D_P$ is a WB-domain, it suffices to assume that $p=(0',e^{i\theta})$, $\theta\in \mathbb R$, which is weakly pseudoconvex. In this case, one must have $a_j'\to 0'$ and $a_{jn}\to e^{i\theta}$ as $j\to \infty$. Denote by $\rho(z):=|z_n|^2-1+P(z')$ a local defining function for $D_P$. Then $\mathrm{dist}(a_j, \partial D_P)\approx -\rho(a_j)\approx 1-|a_{jn}|^2-P(a_j')$. Here and in what follows, we use symbols $\lesssim$ and $\gtrsim$ to denote inequalities up to a positive multiplicative constant. In addition, we use a symbol $\approx $ for the combination of $\lesssim$ and $\gtrsim$. 

Let us denote by $\psi_j:=\phi_{a_{jn},0}$ the automorphism of $D_P$ given in Lemma \ref{dilation-property}. Then $\psi_j\in \mathrm{Aut}(D_P)$, given by
$$
\psi_j(z)=\left( \frac{(1-|a_{jn}|^2)^{1/2m_1}}{(1-\bar{a}_{jn}z_n)^{1/m_1}} z_1,\ldots,  \frac{(1-|a_{jn}|^2)^{1/2m_{n-1}}}{(1-\bar{a}_{jn}z_n)^{1/m_{n-1}}} z_{n-1}, \frac{z_n-a_{jn}}{1-\bar{a}_{jn} z_n}\right),
$$
and hence $\psi_j(a_j)=(b_j,0)$, where 
$$
b_j=  \left( \frac{a_{j1}}{(1-|a_{jn}|^2)^{1/2m_1}} ,\ldots, \frac{a_{j (n-1)}}{(1-|a_{jn}|^2)^{1/2m_{n-1}}}\right). 
$$
Thanks to the boundedness of  $\{b_j\}$, without loss of generality we may assume that $b_j\to b\in \mathbb C^{n-1}$ as $j\to \infty$.

We now divide the argument into two cases as follows:
\smallskip

\noindent
{\bf Case 1.} The sequence $\{a_j\}$ converges  $\Lambda$-nontangentially to $p$ (cf. Remark \ref{nontangent}).  Then $P(a_j')\lesssim \mathrm{dist}(a_j, \partial D_P)$. Without loss of generality, we may assume that $P(a_j')\leq C(1-|a_{jn}|^2-P(a_j'))$ for some $C>0$. This implies that 
$$
P(a_j')\leq \dfrac{C}{1+C}(1-|a_{jn}|^2).
$$
Hence, $P(b_j)=\dfrac{1}{1-|a_{jn}|^2}P(a_j')\leq \dfrac{C}{1+C}<1$ and thus $\psi_j(a_j)=(b_j,0)\to (b,0)\in D_P$ with $P(b)<1$. Therefore, Lemma \ref{interiorpoint-sq} yields $\liminf_{j\to \infty}\sigma_{D_P}(a_j)>0$, which is absurd.

\smallskip

\noindent
{\bf Case 2.} The sequence $\{a_j\}$ does not converge $\Lambda$-nontangentially to $p$. Then $P(a_j')\geq \; c_j \; \mathrm{dist}(a_j, \partial D_P)$ for some sequence $\{c_j\}\subset \mathbb R$ with $0<c_j\to +\infty$. This implies that $P(a_j')\geq c_j'(1-|a_{jn}|^2-P(a_j'))$ for some sequence $\{c_j'\}\subset \mathbb R$ with $0<c_j'\to +\infty$ and hence
$$
P(a_j')\geq \dfrac{c_j'}{1+c_j'}(1-|a_{jn}|^2), \forall j\geq 1.
$$
This tells us that $P(b_j)=\dfrac{1}{1-|a_{jn}|^2}P(a_j')\geq \dfrac{c_j'}{1+c_j'}, \forall j\geq 1$.
Therefore, we arrive at the situation  $b_j\to b$ with $P(b)=1$ and thus $\psi_j(a_j)$ converges to the strongly pseudoconvex boundary point $(b,0)$ of $\partial D_P$, which implies that $\sigma_{D_P}(a_j)=\sigma_{D_P}(\psi_j(a_j)) \to 1$ as $j\to \infty$ again by Theorem \ref{tendto1}. This leads to a contradiction. 

Hence, altogether, we complete the proof of Theorem \ref{Sqtheorem}.

\bigskip

\begin{acknowledgement}Part of this work was done while the first author was visiting the Vietnam Institute for Advanced Study in Mathematics (VIASM). He would like to thank the VIASM for financial support and hospitality. The third author was supported by the Vietnam National Foundation for Science and Technology Development (NAFOSTED) under grant number 101.99-2019.326. It is a pleasure to thank Hyeseon Kim for stimulating discussion.
\end{acknowledgement}

\bibliographystyle{plain}

\begin{thebibliography}{99}
\bibitem[AGK16]{AGK16} T. Ahn, H. Gaussier, and K.-T. Kim, \textit{Positivity and completeness of invariant metrics}, J. Geom. Anal. {\bf 26} (2016), no. 2, 1173--1185.
\bibitem[BP94]{BP94} E. Bedford and S. Pinchuk, \textit{Convex domains with noncompact groups of automorphisms}, Mat. Sb. {\bf 185} (1994), no. 5, 3--26; translation in Russian Acad. Sci. Sb. Math. 82 (1995), no. 1, 1--20.  

\bibitem[DGZ12]{DGZ12} F. Deng, A. Guan, and L. Zhang, \textit{Some properties of squeezing functions on bounded domains}, Pacific J. Math. {\bf 257} (2012), no. 2, 319--341. 
\bibitem[DGZ16]{DGZ16} F. Deng, A. Guan, and L. Zhang, \textit{Properties of squeezing functions and global transformations of bounded domains}, Trans. Amer. Math. Soc.  {\bf 368} (2016), no. 4, 2679--2696. 
\bibitem[DWZZ20]{DWZZ20} F. Deng,  Z. Wang, L. Zhang and X. Zhou, \textit{Holomorphic Invariants of bounded domains}, J. Geom. Anal. {\bf 30} (2020), no. 2, 1204--1217.

\bibitem[DFW14]{DFW14} K. Diederich,  J. E. Forn{\ae}ss, and E. F. Wold, \textit{Exposing points on the boundary of a strictly pseudoconvex or a locally convexifiable domain of finite $1$-type},  J. Geom. Anal. {\bf 24} (2014), 2124--2134.

\bibitem[KZ16]{KZ16} K.-T. Kim and L. Zhang, \textit{On the uniform squeezing property and the squeezing function}, Pac. J. Math.  {\bf 282} (2016), no. 2, 341--358.
\bibitem[LSY04]{LSY04} K. Liu, X. Sun, and S.-T. Yau, \textit{Canonical metrics on the moduli space of Riemann surfaces}, I. J. Differential Geom. {\bf 68} (2004), no. 3, 571--637.


\bibitem[NN19]{NN19} V. T. Ninh and Q. D. Nguyen, \textit{Some properties of h-extendible domains in $\mathbb C^{n+1}$}, J. Math. Anal. Appl. {\bf 485} (2020), no. 2, 123810, 14 pp. 
\bibitem[NNTK19]{NNTK19} V. T. Ninh, T. L. H Nguyen, Q. H. Tran, and H. Kim, \textit{On the automorphism groups of finite multitype models in $\mathbb C^{n}$}, J. Geom. Anal. {\bf 29} (2019), no. 1, 428--450.

\bibitem[Sa12]{Sa12} S. \c{S}ahuto\v{g}lu, \textit{Strong Stein neighbourhood bases}, Complex Var. Elliptic Equ. {\bf 57} (2012), no. 10, 1073--1085. 
\bibitem[Ye09]{Yeung2009} S.-K. Yeung, \textit{Geometry of domains with the uniform squeezing property}, Adv. Math. {\bf 221} (2009), no. 2, 547--569.
\bibitem[Yu95]{Yu95} J. Yu, \textit{Weighted boundary limits of the generalized Kobayashi-Royden metrics on weakly pseudoconvex domains}, Trans. Amer. Math. Soc.  {\bf 347} (1995), no. 2, 587--614.





\end{thebibliography}

\end{document}